\newtheorem{theorem}{Theorem}
\newtheorem{lemma}{Lemma}
\newtheorem{corollary}{Corollary}
\theoremstyle{remark}
\newtheorem*{remark}{Remark}
\newcommand{\lam}{\lambda}
\renewcommand{\phi}{\varphi}
\newcommand{\Gam}{\Gamma}
\numberwithin{equation}{section}
\begin{document}

\title[Benford's Law for Coefficients of Newforms]
{Benford's Law for Coefficients of Newforms}

\author{Marie Jameson} 

\author{Jesse Thorner} 

\author{Lynnelle Ye} 

\address{Marie Jameson, Department of Mathematics, University of Tennessee, Knoxville, TN 37996}
\email{marie.jameson@gmail.com}
\address{Jesse Thorner, Department of Mathematics and Computer Science, Emory University, Atlanta, Georgia 30322}
\email{jesse.thorner@gmail.com}
\address{Lynnelle Ye, Department of Mathematics, Harvard University, Cambridge, MA 02138}
\email{lye@g.harvard.edu}

\begin{abstract}
Let $f(z)=\sum_{n=1}^\infty\lambda_f(n)e^{2\pi i n z}\in S_{k}^{\textup{new}}(\Gamma_0(N))$ be a newform of even weight $k\geq2$ on $\Gamma_0(N)$ without complex multiplication.  Let $\mathbb{P}$ denote the set of all primes.  We prove that the sequence $\{\lambda_f(p)\}_{p\in\mathbb{P}}$ does not satisfy Benford's Law in any integer base $b\geq2$.  However, given a base $b\geq2$ and a string of digits $S$ in base $b$, the set
\[
A_{\lambda_f}(b,S):=\{\textup{$p$ prime : the first digits of $\lambda_f(p)$ in base $b$ are given by $S$}\}
\]
has logarithmic density equal to $\log_b(1+S^{-1})$.  Thus $\{\lambda_f(p)\}_{p\in\mathbb{P}}$ follows Benford's Law with respect to logarithmic density.  Both results rely on the now-proven Sato-Tate Conjecture.
\end{abstract}
\maketitle
\noindent

\section{Introduction and Statement of Results}

In 1881, astronomer Simon Newcomb \cite{SN} made the observation that certain pages of logarithm tables were much more worn than others.  Users of the tables referenced logarithms whose leading digit is 1 more frequently than other logarithms, contrary to the naive expectation that all of the logarithms would be referenced uniformly.  In 1938, Benford \cite{Benford} made a similar observation for a variety of sequences.

This bias, now known as Benford's Law, is given as follows.  Let $\mathbb{N}$ denote the set of positive integers, and let $\mathcal{I}\subset\mathbb{N}$ be an infinite subset.  Fix an integer base $b\geq2$ and a string of digits $S$ in base $b$.  For a given function $g:\mathbb{N}\to\mathbb{R}$, define
\begin{equation}
A_g(b,S):=\{\textup{$i\in \mathbb{N}$ : the first digits of $g(i)$ in base $b$ are given by $S$}\}.
\end{equation}
We define the {\it arithmetic density} of $A_g(b,S)$ within $\mathcal{I}$ by
\begin{equation}
\delta_{\mathcal{I}}(A_g(b,S))=\lim_{x\to\infty}\frac{\#\{i\leq x:i\in\mathcal{I}\cap A_g(b,S)\}}{\#\{i\leq x:i\in\mathcal{I}\}}.
\end{equation}
We say that the sequence $\{g(i)\}_{i\in \mathcal{I}}$ satisfies Benford's Law, or that $\{g(i)\}_{i\in \mathcal{I}}$ is Benford, if for any integer base $b\geq2$ and any string of digits $S$ in base $b$,
\begin{equation}
\delta_{\mathcal{I}}(A_g(b,S))=\log_b(1+S^{-1}).
\end{equation}
It is easy to show \cite{Diaconis} that $\{g(i)\}_{i\in \mathcal{I}}$ is Benford if and only if the set $\{\log_b(g(i)):i\in \mathcal{I}\}$ is equidistributed modulo 1 for each base $b$ (setting $\log_b(g(i))=0$ if $g(i)=0$).  For some general surveys on Benford's Law, we refer the reader to \cite{Hill1, Hill2, MTB, Raimi}.

Stirling's approximation of $\Gamma(s)$ in conjunction with standard equidistribution results quickly yields that $\{n!\}_{n\in\mathbb{N}}$ is a Benford sequence.  However, if $g(n)=n^a$ for any fixed $a\in\mathbb{R}$, then $\{g(n)\}_{n\in\mathbb{N}}$ is not a Benford sequence, for
\[
\limsup_{n\in\mathbb{N}}|n\log(|g(n+1)/g(n)|)|<\infty,
\]
contradicting equidistribution of $\log_b(g(n))$ modulo 1.  Taking $a=1$, one sees that the positive integers are not Benford.

Much real world data, including lengths of rivers, populations of nations, and heights of skyscrapers exhibit behavior which is suggestive of Benford's Law (when restricted to base 10).  There are also several settings in which Benford's law arises that are of arithmetic interest.  In \cite{BBH}, several dynamical systems such as linearly-dominated systems and non-autonomous dynamical systems are shown to be Benford.  Benford's Law is proven for distributions of values of $L$-functions \cite{KM} and the $3x+1$ problem \cite{KM,LS}.  In \cite{ARS}, the image of the partition function is shown to be Benford, as well as the coefficients of an infinite class of modular forms with poles.

While the positive integers are not Benford, we can say even more.  Specifically, if we take $g(n)=n$, the limits defining $\delta_{\mathbb{N}}(A_g(b,1))$ and $\delta_{\mathbb{N}}(A_g(2,10))$ for any $b\geq3$ do not exist.  For example, one easily sees that
{\small
\[
\liminf_{x\to\infty}\frac{\#\{i\leq x:i\in\mathbb{N}\cap A_g(10,1)\}}{\#\{i\leq x:i\in\mathbb{N}\}}=\frac{1}{9},\quad\limsup_{x\to\infty}\frac{\#\{i\leq x:i\in\mathbb{N}\cap A_g(10,1)\}}{\#\{i\leq x:i\in\mathbb{N}\}}=\frac{5}{9}.
\]}%
However, if we change our notion of density, the first digits of the integers still satisfy the distribution in Benford's law.  We define the {\it logarithmic density} of $A_g(b,S)$ in $\mathcal{I}$ by
{\small
\begin{equation}
\widetilde{\delta}_{\mathcal{I}}(A_g(b,S))=\lim_{x\to\infty}\frac{\displaystyle\sum_{\substack{i\leq x \\ i\in\mathcal{I}\cap A_g(b,S)}}i^{-1}}{\displaystyle\sum_{\substack{i\leq x \\ i\in\mathcal{I}}}i^{-1}}.
\end{equation}}%
With this modified notion of density, we have that $\widetilde{\delta}_{\mathbb{N}}(A_g(b,S))$ exists and equals $\log_b(1+S^{-1})$ for any base $b\geq2$ and any string $S$ in base $b$ \cite{Duncan}.  In light of this fact, we say that a sequence $\{g(i)\}_{i\in\mathcal{I}}$ is {\it logarithmically Benford} if
\begin{equation}
\widetilde{\delta}_{\mathcal{I}}(A_g(b,S))=\log_b(1+S^{-1})
\end{equation}
for any base $b$ and any string $S$ in base $b$.  We note that if a set has an arithmetic density, then it also has a logarithmic density, and the two densities are equal.  Thus all Benford sequences are logarithmically Benford.

\begin{remark}
\textup{Logarithmic density is closely related to Dirichlet density, which is ubiquitous in number theory.  For integer sequences, logarithmic density and Dirichlet density have equivalent definitions; this (and much more) is proven in Part 3 of \cite{Tenenbaum}.  For a discussion on Dirichlet density in the context of the prime number theorem for arithmetic progressions or the Chebotarev density theorem, see Chapter 7 of \cite{Neukirch}.}
\end{remark}
Since the $n$-th prime is asymptotically equal to $n\log(n)$ by the prime number theorem, one sees that the primes are not Benford.  However, Whitney \cite{Whitney} proved that for $g(n)=n$, we have $\widetilde{\delta}_{\mathbb{P}}(A_g(10,S))=\log_{10}(1+S^{-1})$ for any string $S$ in base 10.  The fact that the primes are logarithmically Benford follows easily from Whitney's proof.  Serre briefly discusses this problem for the primes in Chapter 4, Section 5 of \cite{arithmetic}.

In this paper, we consider sequences given by Fourier coefficients of certain modular forms without complex multiplication (see \cite{Ono}).  Specifically, let 
\begin{equation}
f(z)=\sum_{n=1}^\infty \lambda_f(n)q^n\in S_k^{\textup{new}}(\Gamma_0(N)),\quad q=e^{2\pi iz}
\end{equation}
be a newform (i.e., a holomorphic cuspidal normalized Hecke eigenform) of even weight $k$ and trivial nebentypus on $\Gamma_0(N)$ that does not have complex multiplication.  The Fourier coefficients $\lambda_f(n)$ of such a newform will be real.  We consider sets of the form
\[
A_{\lambda_f}(b,S)=\{\textup{$n\in\mathbb{N}$ : the first digits of $\lambda_f(n)$ in base $b$ are given by $S$}\}.
\]

One important example of the newforms under our consideration is the weight 12 newform on $\Gamma_0(1)$ given by
\[
\Delta(z)=q\prod_{n=1}^\infty(1-q^n)^{24}=\sum_{n=1}^\infty\tau(n)q^n,
\]
where $\tau(n)$ is the Ramanujan tau function.  Consider the following table.

\begin{table}[H]
\begin{tabular}{|c|c|c|c|}\hline
$x$ & $\#\{p\leq x:\textup{the first digit of $\tau(p)$ is $1$}\}/\pi(x)$  \\ \hline
$10^3$ & $0.28571\ldots$  \\ \hline
$10^4$ & $0.29454\ldots$  \\ \hline
$10^5$ & $0.29993\ldots$  \\ \hline
\end{tabular}
\end{table}

If $\{\tau(p)\}_{p\in\mathbb{P}}$ were Benford, then we would have $\delta_{\mathbb{P}}(A_{\tau}(10,1))=\log_{10}(2)\approx 0.30103$.  If we only consider the above table, then it seems to be the case that $\delta_{\mathbb{P}}(A_{\tau}(10,1))$ exists and might very well equal $\log_{10}(2)$.  However, the plot in Figure 1 indicates that this conclusion is very far from the truth.

\begin{figure}[H]
\centering
\includegraphics[width=0.6\textwidth]{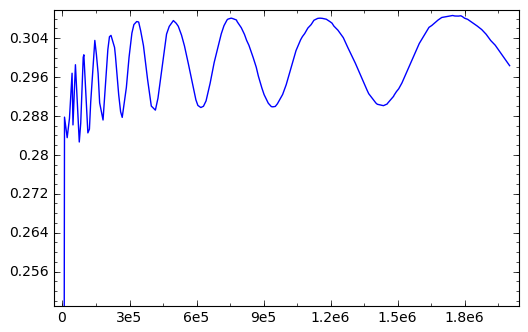}
\caption{The proportion of primes $p\leq x$ for which $\tau(p)$ has leading digit 1 for $x\leq 2\times10^6$.}
\end{figure}

It turns out to be the case that the arithmetic density $\delta_{\mathbb{P}}(A_{\tau}(10,1))$ does not exist.  However, $\widetilde{\delta}_{\mathbb{P}}(A_{\tau}(10,1))$ does exist, and it equals $\log_{10}(2)$.  More generally, we prove the two following results.

\begin{theorem}
\label{no-benford}
Let $f(z) = \sum_{n=1}^\infty \lambda_f(n)q^n\in S_k^{\textup{new}}(\Gamma_0(N))$ be a newform of even weight $k\geq2$ without complex multiplication.  The arithmetic density $\delta_{\mathbb{P}}(A_{\lambda_f}(b,1))$ does not exist for any integer base $b\geq3$, and the arithmetic density $\delta_{\mathbb{P}}(A_{\lambda_f}(2,10))$ does not exist.  Thus the sequence $\{\lambda_f(p)\}_{p\in\mathbb{P}}$ is not Benford.
\end{theorem}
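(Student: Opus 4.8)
The plan is to Fourier-analyze the leading-digit indicator and reduce the claim to an oscillating family of exponential sums over primes, which the Sato--Tate distribution and the prime number theorem evaluate explicitly. First I would record that the leading digit of $\lam_f(p)$ in base $b$ equalling $1$ is equivalent to $\{\log_b|\lam_f(p)|\}\in[0,\log_b 2)$, while the string $S=10$ in base $2$ corresponds to $\{\log_2|\lam_f(p)|\}\in[0,\log_2(3/2))$; in either case we obtain a target arc $I=[0,\ell)\subset\mathbb{R}/\mathbb{Z}$ with $0<\ell<1$. Writing $\lam_f(p)=2p^{(k-1)/2}\cos\th_p$ with $\th_p\in[0,\pi]$ (Deligne's bound), we have
\[
\log_b|\lam_f(p)|=\tfrac{k-1}{2}\log_b p+\log_b(2|\cos\th_p|),
\]
so that, with $N(x):=\#\{p\le x:\{\log_b|\lam_f(p)|\}\in I\}$, the object to understand is $N(x)/\pi(x)$. (The primes with $\lam_f(p)=0$ form a set of density zero and are harmless under the convention $\log_b 0=0$.)

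The engine is to expand $\mathbf{1}_I$ in a Fourier series. For each frequency $n$ the factor $e^{2\pi i n\log_b|\lam_f(p)|}$ splits as $p^{i\gam_n}G_n(\th_p)$, where $\gam_n=\pi n(k-1)/\ln b$ and $G_n(\th)=(2|\cos\th|)^{2\pi i n/\ln b}$ is a fixed bounded continuous function on $[0,\pi]$. The key separation step is
\[
\sum_{p\le x}p^{i\gam_n}G_n(\th_p)=c_n\sum_{p\le x}p^{i\gam_n}+o(\pi(x)),\qquad c_n:=\int_0^\pi G_n(\th)\,d\mu_{\mathrm{ST}}(\th),\quad d\mu_{\mathrm{ST}}(\th)=\tfrac{2}{\pi}\sin^2\th\,d\th,
\]
which I would prove by partial summation: Sato--Tate gives $\sum_{p\le t}G_n(\th_p)=c_n\pi(t)+o(\pi(t))$, and feeding this into Abel summation against the slowly varying weight $t^{i\gam_n}$ turns the $o(\pi(t))$ error into $o(\pi(x))$ after integrating $|E(t)|/t$ against $dt$, so that no effective form of Sato--Tate is needed. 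Combined with the prime number theorem, $\frac{1}{\pi(x)}\sum_{p\le x}p^{i\gam_n}\sim x^{i\gam_n}/(1+i\gam_n)$, this yields the heuristic
\[
\frac{N(x)}{\pi(x)}\approx\sum_{n\in\mathbb{Z}}\widehat{\mathbf{1}_I}(n)\,c_n\,\frac{x^{i\gam_n}}{1+i\gam_n}.
\]
The $n=0$ term is the constant $\ell$ (the naive Benford value), while for $n\neq0$ the factor $x^{i\gam_n}=e^{2\pi i n\frac{k-1}{2}\log_b x}$ oscillates as $x\to\infty$, so the limit fails to exist as soon as one nonzero frequency survives.

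To make survival explicit I would verify two arithmetic facts. First, $\widehat{\mathbf{1}_I}(n)=(1-e^{-2\pi i n\ell})/(2\pi i n)$ is nonzero for every $n\neq0$ with $n\ell\notin\mathbb{Z}$; since $\ell\in\{\log_b 2,\log_2(3/2)\}$ is irrational when $b$ is not a power of $2$, and equals $1/j$ (so that $n=1$ works) when $b=2^j$, some nonzero frequency always has $\widehat{\mathbf{1}_I}(n)\neq0$. Second, and crucially, the Sato--Tate moments are nonzero: a Beta-integral evaluation gives
\[
c_n=\frac{2^{s_n}}{\sqrt{\pi}}\cdot\frac{\Gam\!\big(\tfrac{s_n+1}{2}\big)}{\Gam\!\big(\tfrac{s_n}{2}+2\big)},\qquad s_n=\frac{2\pi i n}{\ln b},
\]
which never vanishes because the Gamma function has no zeros and (for imaginary $s_n$) no poles appear. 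Thus a genuinely oscillating term is always present.

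The main obstacle is not the heuristic but its rigorous justification, because the sequence $\{\log_b|\lam_f(p)|\}$ does \emph{not} equidistribute mod $1$, so one cannot invoke vanishing discrepancy to pass from the smooth modes to the discontinuous indicator $\mathbf{1}_I$. I would resolve this with the Beurling--Selberg method: sandwich $\mathbf{1}_I$ between real trigonometric polynomials $P_M^-\le\mathbf{1}_I\le P_M^+$ of degree $M$ with $\int(P_M^+-P_M^-)\ll 1/M$, apply the separation step to each of the finitely many modes of $P_M^\pm$, and then select two sequences $x_n,x_n'\to\infty$ realizing different phases $\{\tfrac{k-1}{2}\log_b x\}$ before letting $M\to\infty$. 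This produces $\liminf_x N(x)/\pi(x)<\limsup_x N(x)/\pi(x)$, which is precisely the asserted non-existence of $\del_{\mathbb{P}}(A_{\lam_f}(b,1))$ for $b\geq 3$ and of $\del_{\mathbb{P}}(A_{\lam_f}(2,10))$, and hence shows $\{\lam_f(p)\}_{p\in\mathbb{P}}$ is not Benford.
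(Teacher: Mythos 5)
Your proposal is correct, but it takes a genuinely different route from the paper's. The paper never touches Fourier analysis: it fixes $\beta=b^{2/(k-1)}$ and compares the proportion of primes with leading digit $1$ along two families of geometric windows $[\alpha_d\beta^n,\gamma_d\beta^n)$, $d=1,2$, chosen so that inside each window the digit condition on $\lambda_f(p)=2p^{(k-1)/2}\cos\theta_p$ becomes the condition $|\cos\theta_p|\in I_{d,1}(c)$ for an explicit union of intervals; the Sato--Tate theorem makes the two limiting proportions equal to $2\mu_{ST}(I_{d,1}(c))$, and Lemma \ref{difference-bound} shows these differ by more than $1/40$, which is incompatible with the existence of a single density. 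You instead expand the digit indicator in Fourier modes, split each mode as $p^{i\gamma_n}G_n(\theta_p)$, separate the factors by Abel summation against Sato--Tate (this step is sound exactly as you describe: the $o(\pi(t))$ error survives the integration of $|E(t)|/t$, so no effective equidistribution is needed), evaluate $\sum_{p\le x}p^{i\gamma_n}$ by the prime number theorem, and conclude that your ratio $N(x)/\pi(x)$ tracks the continuous periodic profile $\Phi(u)=\sum_n\widehat{\mathbf{1}_I}(n)\,c_n\,e^{2\pi inu}/(1+i\gamma_n)$ in $u=\{\tfrac{k-1}{2}\log_b x\}$; non-constancy of $\Phi$ is exactly your two non-vanishing checks ($\widehat{\mathbf{1}_I}(n)\ne 0$ for a suitable $n\ne 0$, and $c_n\ne 0$ via the Gamma-factor formula, both of which I verified). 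Your route buys more: it applies verbatim to every initial string $S$ (the paper only remarks that its method ``can be modified''), it identifies the actual limiting oscillation rather than merely exhibiting two distinct limit points, and it is the natural generalization of Serre's argument for the primes cited in the introduction. The paper's route buys elementarity: no Beurling--Selberg machinery, no Gamma evaluations, and an explicit numerical gap of $1/40$. Two small points you should make explicit in a write-up: (i) in the sandwich step you need coefficient-wise closeness $|\widehat{P_M^{\pm}}(n)-\widehat{\mathbf{1}_I}(n)|\ll 1/M$, not just the $L^1$ bound, though it follows from it because $P_M^{+}-\mathbf{1}_I\ge 0$ and $\mathbf{1}_I-P_M^{-}\ge 0$ give $|\widehat{P_M^{\pm}}(n)-\widehat{\mathbf{1}_I}(n)|\le\int_0^1(P_M^{+}-P_M^{-})$; and (ii) $G_n$ is discontinuous at $\cos\theta_p=0$, which is harmless since that set has $\mu_{ST}$-measure zero, but it is the reason the equidistribution input must be applied to bounded, $\mu_{ST}$-almost everywhere continuous test functions rather than continuous ones.
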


\begin{remark}
For a string $S$ in base $b$, the method used in the proof of Theorem \ref{no-benford} can be modified to show that there are infinitely many primes $p$ such that $\lambda_f(p)$ begins with $S$. Since this fact is also a direct consequence of Theorem \ref{log-density}, we omit the details.
\end{remark}

\begin{theorem}
\label{log-density}
Let $f(z) = \sum_{n=1}^\infty \lambda_f(n)q^n\in S_k^{\textup{new}}(\Gamma_0(N))$ be a newform of even weight $k\geq2$ without complex multiplication.  Let $b\geq2$ be a given integer base, and let $S$ be an initial string of digits in base $b$.  We have $\widetilde{\delta}_{\mathbb{P}}(A_{\lambda_f}(b,S))=\log_b(1+S^{-1})$.  Thus $\{\lambda(p)\}_{p\in\mathbb{P}}$ is logarithmically Benford.
\end{theorem}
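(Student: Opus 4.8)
The plan is to reduce the statement to a modulo-$1$ equidistribution problem for logarithmic density and then attack it with Weyl's criterion, drawing on the Sato--Tate distribution of the angles $\theta_p$ for the harmonics and on the prime number theorem for the size of $p$. Write the Deligne normalization $\lambda_f(p)=2p^{(k-1)/2}\cos\theta_p$ with $\theta_p\in[0,\pi]$, so that
\[
\log_b|\lambda_f(p)|=\log_b 2+\tfrac{k-1}{2}\log_b p+\log_b|\cos\theta_p|.
\]
By the equivalence recorded in the introduction \cite{Diaconis} (in its logarithmic-density form), $\{\lambda_f(p)\}_{p\in\mathbb P}$ is logarithmically Benford in base $b$ if and only if $\{\log_b|\lambda_f(p)|\}_{p\in\mathbb P}$ is equidistributed modulo $1$ with respect to logarithmic density; the primes with $\lambda_f(p)=0$ (equivalently $\theta_p=\pi/2$) have logarithmic density zero and may be discarded. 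Since $\sum_{p\le x}1/p=\log\log x+O(1)$, the weighted Weyl criterion reduces us to proving, for every nonzero integer $h$, that $\sum_{p\le x}\frac1p\,e(h\log_b|\lambda_f(p)|)=o(\log\log x)$, where $e(t):=e^{2\pi i t}$. Factoring out the constant $e(h\log_b2)$ and setting $t_h:=\pi h(k-1)/\ln b\neq0$ (this is where the hypothesis $k\ge2$ enters), so that $e(h\tfrac{k-1}{2}\log_b p)=p^{it_h}$, the goal becomes
\[
\sum_{p\le x}\frac{p^{it_h}}{p}\,g(\theta_p)=o(\log\log x),\qquad g(\theta):=e\!\left(h\log_b|\cos\theta|\right).
\]

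The two inputs I would assemble are ``Weyl sums'' indexed by the Chebyshev polynomials $U_n$ of the second kind, which form the orthonormal basis of $L^2(\mu_{\mathrm{ST}})$ for the Sato--Tate measure $\mu_{\mathrm{ST}}=\tfrac2\pi\sin^2\theta\,d\theta$, with $U_n(\cos\theta)=\sin((n+1)\theta)/\sin\theta$. For $n\ge1$, Sato--Tate gives $A_n(u):=\sum_{p\le u}U_n(\cos\theta_p)=o(\pi(u))$ (orthogonality kills the main term); partial summation against $u^{it_h-1}$, whose increments are $O(u^{-2})$, then upgrades this to $\sum_{p\le x}p^{it_h-1}U_n(\cos\theta_p)=o(\log\log x)$, the point being that $\int_2^x o(1)\,\frac{du}{u\log u}=o(\log\log x)$. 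For $n=0$ the oscillation must come from $p^{it_h}$ itself: the prime number theorem with classical error term yields $\sum_{p\le x}p^{it_h-1}=O(1)$ when $t_h\neq0$, since $\int^x u^{it_h-1}(\log u)^{-1}\,du$ converges by Dirichlet's test. This last step is exactly the mechanism behind Whitney's theorem \cite{Whitney}, and it is where logarithmic density succeeds while arithmetic density fails (cf. Theorem~\ref{no-benford}). Hence $\sum_{p\le x}p^{it_h-1}U_n(\cos\theta_p)=o(\log\log x)$ for every fixed $n\ge0$; taking $t_h=0$ in the same estimates yields logarithmically weighted Sato--Tate, namely $\frac{1}{\log\log x}\sum_{p\le x}\frac1p\,G(\theta_p)\to\int_0^\pi G\,d\mu_{\mathrm{ST}}$ for every continuous, hence every bounded Riemann-integrable, $G$.

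Finally I would combine these through an approximation argument, and here lies the main obstacle: $g(\theta)=e(h\log_b|\cos\theta|)$ is \emph{not} continuous, oscillating infinitely fast as $\theta\to\pi/2$, so it cannot be expanded directly in the $U_n$. It is, however, bounded and continuous off the single point $\pi/2$, hence Riemann integrable for $\mu_{\mathrm{ST}}$. Given $\eps>0$, I would choose a continuous $g_\eps$ with $|g_\eps|\le1$ and $\int_0^\pi|g-g_\eps|\,d\mu_{\mathrm{ST}}<\eps$, and split $g=g_\eps+(g-g_\eps)$. The tail is bounded by logarithmically weighted Sato--Tate applied to the bounded Riemann-integrable function $|g-g_\eps|$, giving $\sum_{p\le x}\frac1p|g-g_\eps|(\theta_p)<2\eps\log\log x$ for large $x$; and $g_\eps$, being continuous, is uniformly approximated to within $\eps$ by a finite sum $\sum_{n=0}^N c_nU_n(\cos\theta)$, so its contribution is $\sum_{n=0}^N c_n\sum_{p\le x}p^{it_h-1}U_n(\cos\theta_p)+O(\eps\log\log x)=o(\log\log x)+O(\eps\log\log x)$. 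Letting $x\to\infty$ and then $\eps\to0$ forces the target sum to be $o(\log\log x)$, verifying Weyl's criterion and hence Theorem~\ref{log-density}. In summary, the hard part is taming the singular test function near $\theta=\pi/2$ via an $L^1(\mu_{\mathrm{ST}})$ approximation, after which everything reduces to Sato--Tate for the $n\ge1$ harmonics and the prime number theorem for the $n=0$ oscillation that logarithmic weighting exploits.
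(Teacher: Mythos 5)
Your proposal is correct, but it takes a genuinely different route from the paper. The paper never passes through Weyl's criterion: it works directly with the counting definition, decomposing the primes whose coefficients begin with $S$ according to the scale $t$ (i.e.\ $S\cdot b^t\le|\lambda_f(p)|<(S+1)b^t$) and according to which small interval $(i/\ell^2,(i+1)/\ell^2]$ contains $|\!\cos\theta_p|$, then applying the logarithmic form of Sato--Tate (Corollary \ref{cor}) to each piece and summing over scales via Mertens' theorem and Euler's product formula for $\Gamma(z)$ (Lemma \ref{big-inequality}); the discretization parameter $\ell$ plays the role of your $\epsilon$, and the cutoff $|\!\cos\theta_p|>\ell^{-1}$ together with the measure bound $\mu_{ST}([0,\ell^{-1}])$ plays exactly the role of your $L^1(\mu_{ST})$-approximation near $\theta=\pi/2$ --- both devices tame the same singularity, where the test function $e(h\log_b|\!\cos\theta|)$ oscillates infinitely fast. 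Your route cleanly isolates the two arithmetic inputs: Sato--Tate enters only through the harmonics $U_n$, $n\ge1$, while the $n=0$ term is Whitney's mechanism --- the convergence of $\int u^{it-1}(\log u)^{-1}\,du$ for $t\ne0$, via the prime number theorem --- which makes transparent why logarithmic density succeeds where arithmetic density fails (Theorem \ref{no-benford}), and it would generalize readily to other suitably equidistributed normalized sequences. What the paper's argument buys in exchange is elementarity: it needs only the interval form of logarithmic Sato--Tate plus Mertens' estimate, with no weighted Weyl criterion, no extension of weighted equidistribution to Riemann-integrable test functions, and no PNT with error term. The standard facts you leave implicit --- the Weyl criterion for logarithmic density, the squeeze extension of weighted Sato--Tate from continuous to bounded Riemann-integrable test functions, Stone--Weierstrass for the span of $\{U_n(\cos\theta)\}_{n\ge0}$, and the negligibility of the primes with $\lambda_f(p)=0$ --- are all routine, so I see no gap in your outline.
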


\subsection*{Acknowledgements}

The authors thank Ken Ono and the anonymous referee for their comments and Ken Ono for suggesting this project.  The authors used Maple 18, Mathematica 9, and SAGE for the numerical computations and plots.

\section{The Sato-Tate Conjecture}

Let 
\[
f(z)=\sum_{n=1}^\infty \lambda_f(n)q^n\in S_k^{\textup{new}}(\Gamma_0(N))
\]
be a newform of even weight $k\geq2$ and trivial nebentypus on $\Gamma_0(N)$.  The Fourier coefficients $\lambda_f(n)$ will lie in the ring of integers of a totally real number field.  By Deligne's proof of the Weil conjectures, we have that for every prime $p$,
\[
|\lambda_f(p)|\leq 2p^{\frac{k-1}{2}}.
\]
Thus there exists $\theta_p\in[0,\pi]$ satisfying
\[
\lambda_f(p)=2p^{\frac{k-1}{2}}\cos\theta_p.
\]

Around 1960, Sato and Tate studied the sequence $\{\cos\theta_p\}$ as $p$ varies through the primes when $f$ is the newform associated to an elliptic curve $E/\mathbb{Q}$ without complex multiplication.  All such newforms have weight $k=2$, and if $p$ is prime, then
\[
\lambda_f(p)=p+1-\#E(\mathbb{F}_p),\qquad|\lambda_f(p)|\leq2\sqrt{p}.
\]
where $\#E(\mathbb{F}_p)$ is the number of $\mathbb{F}_p$-rational points on $E/\mathbb{Q}$.  Thus $\cos\theta_p$ is the normalized error in approximating $\#E(\mathbb{F}_p)$ with $p+1$.  Sato and Tate conjectured a distribution for the sequence $\{\cos\theta_p\}$, and this conjecture was later generalized to a much larger class of newforms.  This conjecture, which we now state, was proven by Barnet-Lamb, Geraghty, Harris, and Taylor \cite{sato-tate}.

\begin{theorem}[The Sato-Tate Conjecture]
Let $f(z)=\sum_{n=1}^\infty \lambda_f(n)q^n\in S_k^{\textup{new}}(\Gamma_0(N))$ be a newform of even weight $k\geq2$ without complex multiplication.  The sequence $\{\cos\theta_p\}$ is equidistributed in the interval $[-1,1]$ with respect to the measure
\[
d\mu_{ST}=\frac{2}{\pi}\sqrt{1-t^2}~dt.
\]
In other words, if $I\subset[-1,1]$ is a subinterval and we define
\[
\pi_{f,I}(x)=\#\{p\leq x:\cos\theta_p\in I\},
\]
then as $x\to\infty$,
\[
\pi_{f,I}(x)\sim\mu_{ST}(I)\pi(x).
\]
\end{theorem}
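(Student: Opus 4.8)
The plan is to reduce this equidistribution statement to the analytic behavior of a family of $L$-functions via the Weyl criterion, and then to secure that analytic behavior through the \emph{potential} automorphy of the symmetric powers of $f$. The Chebyshev polynomials of the second kind, $U_m(\cos\theta)=\sin((m+1)\theta)/\sin\theta$, form an orthonormal basis of $L^2([-1,1],d\mu_{ST})$ with $\int_{-1}^1 U_m\,d\mu_{ST}=0$ for every $m\geq1$ and $U_0\equiv1$. Hence by the Weyl criterion the sequence $\{\cos\theta_p\}$ is $\mu_{ST}$-equidistributed if and only if
\[
\sum_{p\leq x}U_m(\cos\theta_p)=o(\pi(x))\qquad\text{for every }m\geq1,
\]
so the geometric equidistribution problem becomes a prime-counting estimate for each fixed $m$.

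The second step connects these character sums to $L$-functions. Writing the Satake parameters at $p$ as $e^{\pm i\theta_p}$, one has $U_m(\cos\theta_p)=\sum_{j=0}^{m}e^{i(m-2j)\theta_p}$, which is precisely the trace of the $m$-th symmetric power of the local Frobenius; thus $U_m(\cos\theta_p)$ is the normalized $p$-th coefficient of the symmetric power $L$-function. Consequently
\[
-\frac{L'}{L}(s,\mathrm{Sym}^m f)=\sum_{p}U_m(\cos\theta_p)(\log p)p^{-s}+(\text{absolutely convergent terms for }\Re(s)>\tfrac12),
\]
and a standard Ikehara–Newman Tauberian argument then yields the bound $o(\pi(x))$ above, \emph{provided} $L(s,\mathrm{Sym}^m f)$ continues meromorphically to $\Re(s)\geq1$ and is holomorphic and non-vanishing on the line $\Re(s)=1$. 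It therefore suffices to establish, for each $m\geq1$, that the symmetric power $L$-function is entire and non-vanishing on the edge of the critical strip.

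These analytic inputs would follow from the automorphy of $\mathrm{Sym}^m f$ over $\mathbb{Q}$, but full automorphy is not needed: it is enough that for each $m$ there is a totally real Galois extension $F/\mathbb{Q}$ over which the base change of the symmetric power corresponds to a cuspidal automorphic representation $\pi$ of $\mathrm{GL}_{m+1}$. Such potential automorphy furnishes the continuation and functional equation of $L(s,\mathrm{Sym}^m f_F)$, and Rankin–Selberg theory (non-vanishing of $L(s,\pi\times\tilde\pi)$ on $\Re(s)=1$) together with Brauer induction across $F/\mathbb{Q}$ then recovers holomorphy and non-vanishing of $L(s,\mathrm{Sym}^m f)$ on $\Re(s)=1$, completing the reduction.

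The main obstacle — and the genuine content of the theorem — is establishing this potential automorphy, which is exactly the work of Barnet-Lamb, Geraghty, Harris, and Taylor. One attaches to $f$ its compatible system of $\ell$-adic Galois representations (Deligne) and forms their $m$-th symmetric powers. I would then exhibit a geometric family — a Dwork-type family of Calabi–Yau hypersurfaces whose middle cohomology realizes the symmetric power representation — and invoke Moret-Bailly's theorem (a potential version of the strong-approximation/Hilbert-irreducibility argument) to produce, after passing to a suitable totally real field $F$, a point whose motive is residually automorphic with the correct residual representation. A modularity/automorphy lifting theorem of Taylor–Wiles–Kisin type, proved by Taylor–Wiles patching and an $R=\mathbb{T}$ argument (with Taylor's ``two primes'' device to manage level-raising and residual-image constraints), then upgrades residual automorphy to automorphy of $\mathrm{Sym}^m f$ over $F$. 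Verifying the hypotheses of these lifting theorems — irreducibility and ``big image'' of the residual representations, local–global compatibility at the ramified and $\ell$-adic places, and the very construction of the requisite Calabi–Yau family — is by a wide margin the hardest and most technical part of the argument, and is where essentially all of the difficulty resides.
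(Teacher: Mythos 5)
The paper does not prove this theorem at all: it is quoted verbatim as the main result of Barnet-Lamb, Geraghty, Harris, and Taylor \cite{sato-tate}, and your outline is an accurate summary of precisely that proof --- Serre's equidistribution criterion via Chebyshev polynomials reducing the statement to holomorphy and non-vanishing of $L(s,\mathrm{Sym}^m f)$ on the line $\Re(s)=1$, which is secured by potential automorphy (Dwork-family Calabi--Yau motives, Moret-Bailly, and Taylor--Wiles--Kisin lifting theorems) together with Brauer induction and Rankin--Selberg non-vanishing. Since the genuinely hard content is, as you candidly note, the potential automorphy machinery of the cited work, your proposal matches the route the paper relies on and is as complete as a sketch of this result can reasonably be.
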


The following immediate corollary of the Sato-Tate Conjecture plays an important role in the proof of Theorem \ref{log-density}.

\begin{corollary}
\label{cor}
Let $f(z)=\sum_{n=1}^\infty \lambda_f(n)q^n\in S_k^{\textup{new}}(\Gamma_0(N))$ be a newform of even weight $k\geq2$ without complex multiplication.  Let $I\subset[-1,1]$ be an interval.  As $x\to\infty$, we have
\[
\sum_{\substack{p\leq x \\ \cos\theta_p\in I}}p^{-1}\sim\mu_{ST}(I)\sum_{p\leq x}p^{-1}.
\]
\end{corollary}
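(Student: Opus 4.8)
The plan is to deduce the corollary from the Sato-Tate Conjecture by partial (Abel) summation, converting the asymptotic for the counting function $\pi_{f,I}(x)$ into one for the $1/p$-weighted sum. First I would apply Riemann--Stieltjes integration by parts against the measure $d\pi_{f,I}(t)$ to write
\[
\sum_{\substack{p\le x\\ \cos\theta_p\in I}}\frac{1}{p}=\frac{\pi_{f,I}(x)}{x}+\int_{2}^{x}\frac{\pi_{f,I}(t)}{t^2}\,dt,
\]
and likewise $\sum_{p\le x}p^{-1}=\pi(x)/x+\int_{2}^{x}\pi(t)t^{-2}\,dt$. The boundary terms $\pi_{f,I}(x)/x$ and $\pi(x)/x$ tend to $0$ by the prime number theorem, so the entire statement reduces to comparing the two integrals.

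The key step is to prove
\[
\int_{2}^{x}\frac{\pi_{f,I}(t)}{t^2}\,dt\sim\mu_{ST}(I)\int_{2}^{x}\frac{\pi(t)}{t^2}\,dt.
\]
Writing $J(x)=\int_{2}^{x}\pi(t)t^{-2}\,dt$, I would first observe that $J(x)\to\infty$: since $\pi(t)/t^2\sim 1/(t\log t)$, one has $J(x)\sim\log\log x$, consistent with Mertens' theorem $\sum_{p\le x}p^{-1}\sim\log\log x$. Given $\varepsilon>0$, the Sato-Tate estimate $\pi_{f,I}(t)=\mu_{ST}(I)\pi(t)+o(\pi(t))$ furnishes a threshold $T$ with $|\pi_{f,I}(t)-\mu_{ST}(I)\pi(t)|<\varepsilon\,\pi(t)$ for all $t>T$. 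Splitting the integral at $T$, the contribution of $[2,T]$ is a fixed constant $O_T(1)$, while the contribution of $[T,x]$ is at most $\varepsilon(J(x)-J(T))\le\varepsilon J(x)$. Dividing by $J(x)$ and letting $x\to\infty$ shows that the ratio of the two integrals differs from $\mu_{ST}(I)$ by at most $\varepsilon$; since $\varepsilon$ is arbitrary, the integral asymptotic follows.

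Combining these pieces gives $\sum_{p\le x}p^{-1}\sim J(x)$ and $\sum_{p\le x,\ \cos\theta_p\in I}p^{-1}\sim\mu_{ST}(I)J(x)$, which is the corollary; when $\mu_{ST}(I)=0$ the same argument applies verbatim, reading the Sato-Tate input as $\pi_{f,I}(t)=o(\pi(t))$ and concluding that the weighted sum is $o\bigl(\sum_{p\le x}p^{-1}\bigr)$. The only genuine obstacle is the middle step: one cannot simply integrate the relation $\pi_{f,I}\sim\mu_{ST}(I)\pi$ term by term, and the argument relies essentially on the divergence of $J(x)$ (equivalently, $\sum_p p^{-1}=\infty$) to render the bounded small-$t$ contribution negligible. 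Once that divergence is in hand, the remaining estimates are routine.
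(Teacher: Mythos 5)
Your proof is correct and fills in exactly the deduction the paper leaves implicit: the paper states this corollary without proof (calling it an ``immediate'' consequence of Sato--Tate), and the intended argument is precisely your Abel summation, using the divergence of $\sum_p p^{-1}$ to absorb the bounded contribution from small $t$. Your treatment is complete, including the boundary terms and the degenerate case $\mu_{ST}(I)=0$, so there is nothing to correct.
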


\section{Proof of theorem \ref{no-benford}}

To prove Theorem~\ref{no-benford}, we use the Sato-Tate Conjecture to construct many large intervals on which the proportion of primes $p$ for which $\lam_f(p)$ has leading digit $1$ in a given base $b\geq3$ differs from the Benford expectation.  (For base $b=2$, we use the leading digits 10 because all nonzero real numbers have leading digit 1 in their base 2 expansion.) This shows that $\{\lam_f(p)\}_{p\in\mathbb{P}}$ is not Benford in any base. To do this, we first state a lemma about the Sato-Tate measures of certain intervals.

\begin{lemma} \label{difference-bound}
Fix $b\geq 3$ and let $c$ be a sufficiently large positive integer. For $d=1,2$, set
\[
I_{d,1}(c) = \bigcup_{j\in\mathbb{Z}} \left[\frac{b^{-j}}{db^c-2},\frac{2b^{-j}}{db^c-1}\right] \cap [0,1].
\]
Then
\[
2\mu_{ST}(I_{2,1}(c)) - 2\mu_{ST}(I_{1,1}(c)) > \frac{1}{40}.
\]
\end{lemma}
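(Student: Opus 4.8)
The plan is to pass to the limit $c\to\infty$, where $I_{d,1}(c)$ becomes a clean self-similar set, and then to compare the two limiting measures one octave at a time. First I would note that for $b\geq3$ the intervals $J_{d,j}:=[\frac{b^{-j}}{db^c-2},\frac{2b^{-j}}{db^c-1}]$ are pairwise disjoint: consecutive intervals scale by $b^{-1}$, and the inequality $2(db^c-2)<b(db^c-1)$ (valid for $b\geq3$) forces a gap. Hence $\mu_{ST}(I_{d,1}(c))=\sum_j\mu_{ST}(J_{d,j}\cap[0,1])$. Reindexing by $n=-j-c$ and letting $c\to\infty$ with $n$ fixed, a direct computation gives $J_{d,j}\to[\frac{b^n}{d},\frac{2b^n}{d}]$. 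Since $\sqrt{1-t^2}\leq1$ bounds each term by $\frac{2}{\pi}$ times its length and the lengths decay geometrically in $|n|$ uniformly in $c$, I can interchange the limit with the sum to obtain $\mu_{ST}(I_{d,1}(c))\to\mu_{ST}(E_d)$, where
\[
E_d=\bigcup_{n\in\mathbb{Z}}\Big[\tfrac{b^n}{d},\tfrac{2b^n}{d}\Big]\cap[0,1].
\]
Explicitly $E_1=\bigcup_{n\leq-1}[b^n,2b^n]$ and $E_2=\bigcup_{n\leq0}[\tfrac{b^n}{2},b^n]$.

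Because the target constant is fixed, it then suffices to prove $\mu_{ST}(E_2)-\mu_{ST}(E_1)>\frac{1}{80}$ for every $b\geq3$; the claimed inequality for large $c$ follows from the convergence above. The key structural observation is that the intervals of $E_1$ and $E_2$ pair up within the same octaves: writing $B_n=[b^n,2b^n]$ for $n\leq-1$, the scaling $t\mapsto\frac{b}{2}t$ sends $B_n$ to $T_n:=[\tfrac{b^{n+1}}{2},b^{n+1}]$, and $\{T_n\}_{n\leq-1}$ is exactly the collection of intervals comprising $E_2$. Substituting $t=\tfrac{b}{2}s$ therefore gives
\[
\mu_{ST}(E_2)-\mu_{ST}(E_1)=\frac{2}{\pi}\sum_{n\leq-1}\int_{B_n}\Big(\tfrac{b}{2}\sqrt{1-\tfrac{b^2}{4}s^2}-\sqrt{1-s^2}\Big)\,ds.
\]

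I would then show that every term with $n\leq-2$ is nonnegative and that the single $n=-1$ term already exceeds $\frac{1}{80}$. For the first point, clearing the square root (legitimate since $b^4>16$) shows the integrand is nonnegative exactly when $s^2\leq\frac{4}{b^2+4}$; since $\max_{s\in B_n}s=2b^n\leq2b^{-2}$ for $n\leq-2$, the inequality $b^4\geq b^2+4$ yields $2b^{-2}\leq\frac{2}{\sqrt{b^2+4}}$, so the integrand is nonnegative throughout $B_n$. For the $n=-1$ term I would evaluate the elementary integrals against the antiderivative $F(t)=\tfrac1\pi(t\sqrt{1-t^2}+\arcsin t)$, obtaining
\[
\mu_{ST}(T_{-1})-\mu_{ST}(B_{-1})=\Big(\tfrac13-\tfrac{\sqrt3}{4\pi}\Big)-\big(F(\tfrac2b)-F(\tfrac1b)\big).
\]
A short derivative check shows $F(\tfrac2b)-F(\tfrac1b)$ is decreasing in $b$, so this difference is smallest at $b=3$, where it equals $\tfrac13-\tfrac{\sqrt3}{4\pi}-F(\tfrac23)+F(\tfrac13)\approx0.0133>\tfrac{1}{80}$.

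The main obstacle, and the reason the constant in the statement is so modest, is the tightness at $b=3$: discarding all octaves below the top one leaves almost no slack, since the top octave beats $\frac1{80}$ by under $10^{-3}$, so the final comparison must be carried out with exact closed forms rather than crude bounds. A secondary technical point is justifying the interchange of limit and sum near $t=1$, precisely where the tight margin lives; there the boundary interval is cut by $\cap[0,1]$, and I would verify separately that its measure converges to $\mu_{ST}([\tfrac12,1])=\tfrac13-\tfrac{\sqrt3}{4\pi}$, since the uniform geometric tail bound only controls the small-$t$ end.
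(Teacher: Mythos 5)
Your proof is correct and takes essentially the same approach as the paper: both decompose $I_{1,1}(c)$ and $I_{2,1}(c)$ into matched pairs of intervals, observe that the top pair (limiting to $[1/2,1]$ versus $[b^{-1},2b^{-1}]$, with the tightest case at $b=3$) already supplies more than the required constant, and check that every remaining pair contributes a nonnegative difference. The only distinctions are cosmetic: you pass to the limit $c\to\infty$ wholesale before comparing, and you supply the verifications (the scaling substitution $t\mapsto\tfrac{b}{2}t$ for nonnegativity, the monotonicity in $b$, and the numerical evaluation at $b=3$) that the paper asserts without proof.
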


\begin{proof}[Proof of Lemma \ref{difference-bound}]
If $c>2$, then
{\small
\[
\frac{1}{b} < \frac{b^{c-1}}{b^c-2} < \frac{2b^{c-1}}{b^c-1} < 1 \qquad \text{and} \qquad \frac{1}{b} < \frac{b^{c}}{2b^c-2} < 1 < \frac{2b^{c}}{2b^c-1}.
\]}%
Thus
{\small
\[
I_{1,1}(c) =  \bigcup_{j\geq -c+1} \left[\frac{b^{-j}}{b^c-2},\frac{2b^{-j}}{b^c-1}\right] = \bigcup_{m\geq 0} \left[\frac{b^{-1-m}}{1-2b^{-c}},\frac{2b^{-1-m}}{1-b^{-c}}\right]
\]}%
and
{\small
\begin{align*}
I_{2,1} (c)&= \left[\frac{b^{c}}{2b^c-2},1\right] \cup \bigcup_{j>-c} \left[\frac{b^{-j}}{2b^c-2},\frac{2b^{-j}}{2b^c-1}\right]\\
&= \left[\frac{1}{2-2b^{-c}},1\right] \cup \bigcup_{m>0} \left[\frac{b^{-m}}{2-2b^{-c}},\frac{2b^{-m}}{2-b^{-c}}\right].
\end{align*}}%
For all $b\geq3$, we have
{\small
\begin{align*}
&\lim_{c\to\infty}\left(\mu_{ST}\left(\left[\frac{1}{2-2b^{-c}},1\right]\right) - \mu_{ST}\left(\left[\frac{b^{-1}}{1-2b^{-c}},\frac{2b^{-1}}{1-b^{-c}}\right]\right)\right)\\
=\;& 2\mu_{ST}\left(\left[1/2,1\right]\right) - 2\mu_{ST}\left(\left[b^{-1},2b^{-1}\right]\right)>\frac{1}{40}.
\end{align*}}%
Thus for all sufficiently large $c$, we have
{\small
\[
2\mu_{ST}\left(\left[\frac{1}{2-2b^{-c}},1\right]\right) - 2\mu_{ST}\left(\left[\frac{b^{-1}}{1-2b^{-c}},\frac{2b^{-1}}{1-b^{-c}}\right]\right)>\frac{1}{40}
\]}
Similarly, we have
{\small
\[
2\mu_{ST}\left(\left[\frac{b^{-m}}{2-2b^{-c}},\frac{2b^{-m}}{2-b^{-c}}\right]\right) - 2\mu_{ST}\left( \left[\frac{b^{-1-m}}{1-2b^{-c}},\frac{2b^{-1-m}}{1-b^{-c}}\right]\right)>0
\]}%
for all $m>0,$ and the result follows.
\end{proof}

Now we may prove Theorem \ref{no-benford}.

\begin{proof}[Proof of Theorem \ref{no-benford}]

Let $f$ be a newform as in the statement of the theorem and let $b\geq 3$.  In order prove that the arithmetic density $\delta_{\mathbb{P}}(A_{\lambda_f}(b,S))$ does not exist, it suffices to show that for some fixed $\beta>1$, the value of
\[
\lim_{n\to\infty}\frac{\#\{\alpha \beta^n \leq p < \gamma \beta^n : p \in A_{\lambda_f}(b,1)\}}{\#\{\alpha \beta^n \leq p < \gamma \beta^n\}}
\]
varies with different choices of values of $\alpha$ and $\gamma$ with $\alpha<\gamma$.

We start with some preliminary constructions.  For $S\in\{1,\ldots,b-1\}$, define
\begin{align*}
I_{1,S}(c) &= \bigcup_{j\in\mathbb{Z}} \left[\frac{Sb^{-j}}{b^c-2},\frac{(S+1)b^{-j}}{b^c-1}\right] \cap [0,1],\\
I_{2,S}(c) &= \bigcup_{j\in\mathbb{Z}} \left[\frac{Sb^{-j}}{2b^c-2},\frac{(S+1)b^{-j}}{2b^c-1}\right] \cap [0,1].
\end{align*}
Fix $0<\epsilon<1/40$, and let $c$ be a sufficiently large positive integer so that Lemma \ref{difference-bound} holds and
\[
\mu_{ST}\left([0,1] - \cup_{S=1}^{b-1}I_{d,S}(c)\right) < \epsilon/4
\]
for $d=1,2$.  Let $\beta=b^{\frac{2}{k-1}}$, $\alpha_1 =(\frac{b^c-2}{2})^{\frac{2}{k-1}}$, and $\gamma_1 = (\frac{b^c-1}{2})^{\frac{2}{k-1}}$.  We consider the primes $p$ such that
\begin{equation}\label{boundforp}
(b^c-2)b^n\leq 2p^{\frac{k-1}{2}}< (b^c-1)b^n.
\end{equation}
Note that if $p$ is bounded as in \eqref{boundforp} and $|\!\cos\theta_p| \in I_{1,S},$ then
\[
|\lambda_f(p)| \in [Sb^{n-j},(S+1)b^{n-j})
\]
for some $j \in \mathbb{Z}$; that is, its first digits are given by $S$.  By letting $c$ be sufficiently large and setting $S=1$, the Sato-Tate Conjecture implies that
\begin{equation}\label{proportion1}
\left|\lim_{n\to\infty}\frac{\#\{\alpha_1 \beta^n \leq p < \gamma_1 \beta^n : p \in A_{\lambda_f}(b,1)\}}{\#\{\alpha_1 \beta^n \leq p < \gamma_1 \beta^n\}} -2\mu_{ST}(I_{1,1}(c))\right|< \frac{\epsilon}{2}.
\end{equation}

Similarly, by letting $\alpha_2 =(\frac{2b^c-2}{2})^{\frac{2}{k-1}}$ and $\gamma_2 = (\frac{2b^c-1}{2})^{\frac{2}{k-1}}$, we find that
\begin{equation}\label{proportion2}
\left|\lim_{n\to\infty}\frac{\#\{\alpha_2 \beta^n \leq p < \gamma_2 \beta^n : p \in A_{\lambda_f}(b,1)\}}{\#\{\alpha_2 \beta^n \leq p < \gamma_2 \beta^n\}} - 2\mu_{ST}(I_{2,1}(c))\right|< \frac{\epsilon}{2}.
\end{equation}

Now, suppose to the contrary that $\delta_{\mathbb{P}}(A_{\lambda_f}(b,1))$ exists.  It follows from \eqref{proportion1} and \eqref{proportion2} that
\[
\left| 2\mu_{ST}(I_{1,1})(c) - 2\mu_{ST}(I_{2,1})(c)\right| < \epsilon < \frac{1}{40},
\]
which contradicts Lemma \ref{difference-bound}.  The theorem now follows for bases $b\geq3$.  For $b=2$, one arrives at the same conclusion as for $b\geq3$ by comparing $I_{d,10}(c)$ for $d=1,3$ in Lemma \ref{difference-bound}.  The computations for $b=2$ are essentially the same.
\end{proof}

\section{Proof of Theorem~\ref{log-density}}



For this section, $p$ will always denote a prime.  Let $f$ be a newform satisfying the hypotheses of Theorem \ref{log-density}.  Let $b\geq2$ be a base, and let $S$ be a string of digits in base $b$.  By the definition of a logarithmically Benford sequence and the estimate
\begin{equation}
\label{sum-recip}
\sum_{p\le x}p^{-1}\sim\log\log x,
\end{equation}
a proof of Theorem \ref{log-density} will follow from proving that as $x\to\infty$,
\begin{equation}
\label{goal}
\sum_{\substack{p\leq x \\ p\in A_{\lambda_f}(b,S)}}p^{-1}\sim\log_b(1+S^{-1})\log\log x.
\end{equation}
This will be a consequence of the following key lemma.
\begin{lemma}
\label{big-inequality}
Let $f(z) = \sum_{n=1}^\infty \lambda_f(n)q^n\in S_k^{\textup{new}}(\Gamma_0(N))$ be a newform of even weight $k\geq2$ without complex multiplication.  Let $b\geq2$ be a given base, let $S$ be an initial string of digits in base $b$, and let $\ell> \max\{S,40\}$ be an integer.  As $x\to\infty$, we have
\begin{align*}
&(1+o(1))(\log_b(1+S^{-1})-\log(1+\ell^{-1}))\log\log x\\
\leq\;&\sum_{\substack{p\le x \\ p\in A_{\lambda_f}(b,S) \\ |\!\cos\theta_p|>\ell^{-1}}}p^{-1}\\
\leq\;&(1+o(1))(\log_b(1+S^{-1})+\log(1+\ell^{-1}))\log\log x+2\log\log\ell.
\end{align*}
\end{lemma}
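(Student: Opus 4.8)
The plan is to turn the leading-digit condition into a statement about the fractional part of a logarithm and then to separate the contributions of the size of $p$ and of the angle $\theta_p$. Since $\lambda_f(p)=2p^{(k-1)/2}\cos\theta_p$, on the range $|\cos\theta_p|>\ell^{-1}$ the coefficient is nonzero and
\[
\log_b|\lambda_f(p)| = \log_b 2 + \tfrac{k-1}{2}\log_b p + \log_b|\cos\theta_p|.
\]
The digits of $|\lambda_f(p)|$ begin with $S$ exactly when $\{\log_b|\lambda_f(p)|\}$ lies in the arc $J\subset\mathbb{R}/\mathbb{Z}$ of length $L:=\log_b(1+S^{-1})$ determined by $S$. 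Writing $G_p:=\log_b 2+\tfrac{k-1}{2}\log_b p$ for the ``size'' part, the sum in question is $\sum_{p\le x}p^{-1}\mathbf{1}\big[\,|\cos\theta_p|>\ell^{-1},\ \{G_p+\log_b|\cos\theta_p|\}\in J\,\big]$, so I have reduced matters to understanding, under the logarithmic weighting $p^{-1}$, the interaction of $\{G_p\}\bmod 1$ with $|\cos\theta_p|$.

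First I would record the two marginal inputs. For the angle, Corollary \ref{cor} gives, for any interval $I$, that $\sum_{p\le x,\,\cos\theta_p\in I}p^{-1}\sim\mu_{ST}(I)\log\log x$, and by the symmetry $t\mapsto -t$ the analogous statement for $|\cos\theta_p|$ with measure $2\mu_{ST}$. For the size, the prime number theorem (the logarithmic equidistribution behind Whitney's theorem) shows $\{G_p\}$ is equidistributed modulo $1$ against $p^{-1}$. If these decoupled, the main term would be exactly the value of the averaged indicator: writing $V_s=\{t\in(\ell^{-1},1]:\{s+\log_b t\}\in J\}$, one computes
\[
\int_0^1 2\mu_{ST}(V_s)\,ds = \int_{\ell^{-1}}^1\Big(\int_0^1\mathbf{1}[\{s+\log_b t\}\in J]\,ds\Big)\,d(2\mu_{ST})(t) = L\cdot 2\mu_{ST}\big((\ell^{-1},1]\big),
\]
since the inner average of an arc of length $L$ is $L$; as $2\mu_{ST}((\ell^{-1},1])=1-O(\ell^{-1})$ this is the target $L\log\log x$.

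To realize this using Sato--Tate, I would slice the primes into the multiplicative blocks $\Pi_n=\{p:b^{n}\le 2p^{(k-1)/2}<b^{n+1}\}$, refined into thin sub-blocks on which $\{G_p\}$ is nearly a constant $s$; on each sub-block the digit condition becomes, up to the nearly-fixed phase, the pure angle condition $|\cos\theta_p|\in V_s$, which I would sandwich above and below by the union-of-intervals sets of Lemma \ref{difference-bound}. Summing the per-block Benford arcs (each of $\log_b$-length $L$) over the $\asymp\log_b\ell$ decades filling $(\ell^{-1},1]$, and reassembling the blocks by Abel summation against $\sum_{p\le x}p^{-1}\sim\log\log x$, reproduces the factor $L$ and yields $L\cdot 2\mu_{ST}((\ell^{-1},1])\log\log x$ as the leading behavior, consistent with both sides of the asserted inequality.

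The hard part is precisely this decoupling of the size of $p$ from the angle $\theta_p$: Corollary \ref{cor} controls conditions on $\cos\theta_p$ over the full prime range but does not localize to a thin size-block, whereas the digit condition couples $\{G_p\}$ with $\log_b|\cos\theta_p|$. I would address it by applying Sato--Tate on the positive-proportion sub-blocks $\Pi_n$ and controlling the assembly uniformly over the $\asymp\log_b\ell\cdot\log_b x$ blocks; the incomplete Benford arc in the bottom decade near $|\cos\theta_p|=\ell^{-1}$ is what produces the slack $\pm\log(1+\ell^{-1})\log\log x$, while bounding the boundary and small-prime contributions crudely by $\sum_{p\le\mathrm{poly}(\ell)}p^{-1}\ll\log\log\ell$ produces the additive $2\log\log\ell$. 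I would note the alternative of proving genuine joint equidistribution of $(\{G_p\},\cos\theta_p)$ through the nonvanishing at $s=1-i\sigma$ of the twisted symmetric-power $L$-functions $L(s,\mathrm{Sym}^n f)$; this would give an exact asymptotic but uses more than the black-box Corollary \ref{cor}, so the two-sided sandwich is preferable and is exactly what the stated error terms encode.
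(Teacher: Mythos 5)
Your outline is correct in substance, and at its core it is the paper's own argument with the double sum transposed. Both proofs run on the same three inputs: the Sato--Tate theorem localized to thin multiplicative blocks of primes, Mertens' theorem to evaluate $\sum p^{-1}$ over such a block, and the crude bound $\sum_{p\le \mathrm{poly}(\ell)}p^{-1}\le 2\log\log\ell$ for the small primes. The paper fixes a decade $S\cdot b^t\le|\lambda_f(p)|<(S+1)b^t$ together with a thin angle slice $i/\ell^2<|\!\cos\theta_p|\le (i+1)/\ell^2$, notes that these two conditions trap $p$ in a multiplicative interval, applies Corollary \ref{cor} there, and then sums over $t$ via Euler's product formula for $\Gamma(z)$; the phase equidistribution of $\{G_p\}$ that you invoke never appears explicitly in the paper---it is carried out implicitly by that Gamma-function telescoping, which is exactly the discrete form of ``the average over a uniform phase of an arc of length $L$ is $L$.'' You instead fix the prime block (freezing the phase at $s$), apply Sato--Tate to the phase-dependent angle set $V_s$, and integrate over $s$ by Fubini. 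Your organization is more transparent: it exhibits the main term $L\cdot 2\mu_{ST}((\ell^{-1},1])\log\log x$ and the independence heuristic, and carried out fully it yields a genuine asymptotic for the truncated sum rather than only a sandwich. The paper's organization gets the discretization error for free: its slack is simply the slice-width factor $\log_b(1+i^{-1})\le\log_b(1+\ell^{-1})$, and it never needs $V_s$ or the phase as explicit objects. Incidentally, you do not need Lemma \ref{difference-bound} to sandwich $V_s$: that lemma is tailored to the leading-digit-one sets of Theorem \ref{no-benford}, and $V_s$ is already a finite union of intervals, so Corollary \ref{cor} applies to it by additivity.

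Two points need attention in a full write-up. First, the step you rightly isolate as the hard one---Sato--Tate on a block $[y,Cy]$---follows from the natural-density theorem because $\pi(Cy)-\pi(y)\gg_C\pi(Cy)$, giving a relative error that is $o(1)$ uniformly as $y\to\infty$; but you must then verify that these per-block errors, summed against block weights totalling $\sim\log\log x$, remain $o(\log\log x)$ (a Ces\`aro-type argument, with the finitely many blocks of bounded left endpoint contributing $O_\ell(1)$). The paper elides exactly the same point when it applies Corollary \ref{cor} to differences of partial sums, so this is a gap to fill, not a flaw in your route. Second, your closing claim that your main term is ``consistent with both sides of the asserted inequality'' should be checked, not asserted: your slack below $L\log\log x$ is $L\cdot 2\mu_{ST}([0,\ell^{-1}])=\frac{4}{\pi}L\ell^{-1}+O(\ell^{-3})$, while the lemma's stated slack is $\log(1+\ell^{-1})=\ell^{-1}+O(\ell^{-2})$, so consistency of the lower bound amounts to $\frac{4}{\pi}L\lesssim 1$, i.e.\ $L=\log_b(1+S^{-1})\le\pi/4$ up to $O(\ell^{-1})$ terms. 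This holds for every base and string except the vacuous case $b=2$, $S=1$, where your (correct) asymptotic shows the natural-log form of the lower bound is actually slightly too strong as literally stated; with the slack $\log_b(1+\ell^{-1})$ instead---which is what the paper actually quotes when it uses the lemma in the proof of Theorem \ref{log-density}, indicating the natural log in the statement is a typo---your asymptotic implies the lemma for all $b$ and $S$. Making that one-line comparison explicit would turn your sketch into a proof that is both sharper than and reconciled with the stated lemma.
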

\begin{proof}
We prove the upper bound; the lower bound is proven similarly.  Writing $\lambda_f(p)=2p^{\frac{k-1}{2}}\cos\theta_p$, we first observe that
{\small
\begin{align*}
\sum_{\substack{p\le x \\ p\in A_{\lambda_f}(b,S) \\ |\!\cos\theta_p|>\ell^{-1}}}p^{-1}&=\sum_{t=-\infty}^{\infty}\sum_{i=\ell}^{\ell^2-1}\sum_{\substack{p\le x\\ S\cdot b^t\le |\lambda_f(p)|<(S+1)\cdot b^t\\ \frac{i}{\ell^2}<|\!\cos\theta_p|\le \frac{i+1}{\ell^2}}}p^{-1}\\
&\leq \sum_{t=-\infty}^{\infty}\sum_{i=\ell}^{\ell^2-1}\sum_{\substack{p\leq x \\ \left(\frac{S\ell^{2}}{i+1} b^t\right)^{\frac{2}{k-1}}\le p\le\left(\frac{(S+1)\ell^{2}}{i} b^t\right)^{\frac{2}{k-1}}\\ \frac{i}{\ell^{2}}<|\!\cos\theta_p|\le \frac{i+1}{\ell^{2}}}}p^{-1}.
\end{align*}}%
To bound the contribution when $t<0$, all of the primes in the sum are at most $(\frac{(S+1)\ell}{b})^{\frac{2}{k-1}}$; since $\ell>\max\{S,40\}$, we have
\[
\sum_{t=-\infty}^{-1}\sum_{i=\ell}^{\ell^2-1}\sum_{\substack{p\leq x \\ \left(\frac{S\ell^{2}}{i+1} b^t\right)^{\frac{2}{k-1}}\le p\le\left(\frac{(S+1)\ell^{2}}{i} b^t\right)^{\frac{2}{k-1}}\\ \frac{i}{\ell^{2}}<|\!\cos\theta_p|\le \frac{i+1}{\ell^{2}}}}p^{-1}\leq\sum_{p\leq \left(\frac{(S+1)\ell}{b}\right)^{\frac{2}{k-1}}}p^{-1}\leq 2\log\log \ell.
\]

To bound the contribution when $t\geq0$, fix $\ell\leq i\leq\ell^2-1$.  Recall that we may write $\lambda_f(p)=2p^{\frac{k-1}{2}}\cos\theta_p$.   If $(\frac{(S+1)\ell^2}{i} b^t)^{\frac{2}{k-1}}\le x$, then Corollary \ref{cor} implies that
\begin{align*}
&\sum_{\substack{p\le x\\ S\cdot b^t\le |\lambda_f(p)|<(S+1)\cdot b^t\\ \frac{i}{\ell^2}<|\!\cos\theta_p|\le \frac{i+1}{\ell^2}}}p^{-1}\\
\le\;&\sum_{\substack{\left(\frac{S\ell^{2}}{i+1} b^t\right)^{\frac{2}{k-1}}\le p\le\left(\frac{(S+1)\ell^{2}}{i} b^t\right)^{\frac{2}{k-1}}\\ \frac{i}{\ell^{2}}<|\!\cos\theta_p|\le \frac{i+1}{\ell^{2}}}}p^{-1}\\
=\;&2(1+o(1))\mu_{ST}\left(\left[\frac{i}{\ell^{2}},\frac{i+1}{\ell^{2}}\right]\right)\sum_{\left(\frac{S\ell^{2}}{i+1} b^t\right)^{\frac{2}{k-1}}\le p\le\left(\frac{(S+1)\ell^{2}}{i} b^t\right)^{\frac{2}{k-1}}}p^{-1}\\
=\;&2(1+o(1))\mu_{ST}\left(\left[\frac{i}{\ell^{2}},\frac{i+1}{\ell^{2}}\right]\right)\log\left(\frac{\log_b(\frac{\ell^{2}}{i})+\log_b(S+1)+t}{\log_b(\frac{\ell^{2}}{i+1})+\log_b(S)+t}\right).
\end{align*}

Setting $B_0=\log_b(\frac{\ell^{2}}{i+1})+\log_b(S)$ and $B_1=\log_b(\frac{\ell^{2}}{i})+\log_b(S+1)$, we have for any large $N$ that
{\small
\[
\sum_{t=0}^N\sum_{\left(\frac{S\ell^{2}}{i+1} b^t\right)^{\frac{2}{k-1}}\le p\le\left(\frac{(S+1)\ell^{2}}{i} b^t\right)^{\frac{2}{k-1}}}p^{-1}\sim\log\prod_{t=0}^N\frac{B_1+t}{B_0+t}.
\]}%

Switching the order of summation, we obtain the inequality

{\small
\begin{align*}
&\sum_{t=0}^{\infty}\sum_{i=\ell}^{\ell^2-1}\sum_{\substack{p\le x\\ S\cdot b^t\le |\lambda_f(p)|<(S+1)\cdot b^t\\ \frac{i}{\ell^2}<|\!\cos\theta_p|\le \frac{i+1}{\ell^2}}}p^{-1}\\
\leq\;&\sum_{i=\ell}^{\ell^{2}-1}2\mu\left(\left[\frac{i}{\ell^{2}},\frac{i+1}{\ell^{2}}\right]\right)(1+o(1))\sum_{t=0}^{\infty}\sum_{\left(\frac{S\ell^{2}}{i+1} b^t\right)^{\frac{2}{k-1}}\le p\le\min\left\{\left(\frac{(S+1)\ell^{2}}{i} b^t\right)^{\frac{2}{k-1}},x\right\}}p^{-1}\\
\leq\;&\sum_{i=\ell}^{\ell^{2}-1}2\mu\left(\left[\frac{i}{\ell^{2}},\frac{i+1}{\ell^{2}}\right]\right)(1+o(1))\log\left(\prod_{0\le t\le\log_b\left(\frac{ix^{\frac{k-1}{2}}}{(S+1)\ell^{2}}\right)}\frac{B_1+t}{B_0+t}\right)+2\log\log \ell.
\end{align*}}%
Using Euler's formula for the Gamma function
\[
\Gam(z)=\lim_{n\to\infty}\frac{n!n^z}{\prod_{i=0}^n(z+i)},
\]
we find that the contribution from $t\geq0$ is at most
\begin{align*}
&\sum_{i=\ell}^{\ell^{2}-1}2\mu\left(\left[\frac{i}{\ell^{2}},\frac{i+1}{\ell^{2}}\right]\right)(1+o(1))\left(\log\frac{\Gam(B_0)}{\Gam(B_1)}+(B_1-B_0)\log\log x\right)\\
\leq\;&(1+o(1))\left(\log_b\left(1+\ell^{-1}\right)+\log_b\left(1+S^{-1}\right)\right)\log\log x.
\end{align*}
This proves the claimed upper bound.  Using the inequality
\[
\sum_{\substack{p\le x\\ S\cdot b^t\le 2p^{\frac{k-1}{2}}|\!\cos\theta_p| <(S+1)\cdot b^t\\ \frac{i}{\ell}<|\!\cos\theta_p|\le \frac{i+1}{\ell}}}p^{-1}
\ge\sum_{\substack{\left(\frac{S\ell}{i} b^t\right)^{\frac{2}{k-1}}\le p\le\left(\frac{(S+1)\ell}{i+1} b^t\right)^{\frac{2}{k-1}}\\ \frac{i}{\ell}<|\!\cos\theta_p|\le \frac{i+1}{\ell}}}p^{-1}
\]
for $\ell\leq i\leq \ell^2-1$, the lower bound is proven similarly.
\end{proof}

\begin{proof}[Proof of Theorem \ref{log-density}]
Let $0<\epsilon<\log_b(2)$, let $\ell>\max\{\frac{1}{b^\epsilon-1},S,40\}$ be an integer, and let $x>\exp((\log\ell)^{2/\epsilon})$.  If we write $\lambda_f(p)=2p^{\frac{k-1}{2}}\cos\theta_p$ with $\theta_p\in[0,\pi]$, then
\begin{equation}
\label{initial}
\sum_{\substack{p\le x\\ p\in A_f(b,S)}}p^{-1}=\sum_{\substack{p\le x \\ p\in A_{\lambda_f}(b,S) \\ |\!\cos\theta_p|\leq\ell^{-1}}}p^{-1}
+\sum_{\substack{p\le x \\ p\in A_{\lambda_f}(b,S) \\ |\!\cos\theta_p|>\ell^{-1}}}p^{-1}.
\end{equation}
By Corollary \ref{cor}, the first term is at most
\begin{equation}
\label{term1}
\sum_{\substack{p\le x\\ 0\le|\!\cos\theta_p|\le\ell^{-1}}}p^{-1}=(2+o(1))\mu_{ST}\left(\left[0,\ell^{-1}\right]\right)\log\log x.
\end{equation}
Using Lemma \ref{big-inequality}, we now have
{\small
\begin{align*}
&(1+o(1))(\log_b(1+S^{-1})-\log_b(1+\ell^{-1}))\log\log x\\
\leq\;&\sum_{\substack{p\le x\\ p\in A_f(b,S)}}p^{-1}\\
\leq\;&(1+o(1))(\log_b(1+S^{-1})+\log_b(1+\ell^{-1})+2\mu_{ST}([0,\ell^{-1}]))\log\log x+2\log\log \ell.
\end{align*}}%
Thus
\begin{align*}
&(1+o(1))(\log_b(1+S^{-1})-\epsilon)\log\log x \\
\leq\;&\sum_{\substack{p\le x\\ p\in A_f(b,S)}}p^{-1}\\
\leq\;&(1+o(1))(\log_b(1+S^{-1})+9\log(b)\epsilon)\log\log x.
\end{align*}
Letting $\epsilon\to0$, we obtain (\ref{goal}), as desired.
\end{proof}

\bibliography{biblio}
\end{document}